\def\C{\mathbb{C}}
\def\Q{\mathbb{Q}}
\def\R{\mathbb{R}}
\def\Z{\mathbb{Z}}
\def\Lt{\widetilde{L}}
\def\Pt{\widetilde{P}}
\def\Qt{\widetilde{Q}}
\def\Cb{\mathbb{C}}
\def\Nb{\mathbb{N}}
\def\Qb{\mathbb{Q}}
\def\Rb{\mathbb{R}}
\def\at{\widetilde{a}}
\def\ft{\widetilde{f}}
\def\Ft{\widetilde{F}}
\def\Lt{\widetilde{L}}
\def\deltat{\widetilde{\delta}}
\def\Pt{\widetilde{P}}
\def\Qt{\widetilde{Q}}
\def\Hc{\mathcal{H}}
\DeclareMathOperator\re{Re}
\newtheorem{theorem}{Theorem}
\newtheorem{lemma}[theorem]{Lemma}
\newtheorem{conjecture}[theorem]{Conjecture}
\theoremstyle{remark}
\newtheorem*{example}{Example}
\numberwithin{equation}{section}
\begin{document}


\title[Rational Approximations via Hankel Determinants]{Rational Approximations via Hankel Determinants}
\author{Timothy Ferguson}

\begin{abstract}
Define the monomials $e_n(x) := x^n$ and let $L$ be a linear functional. In this paper we describe a method which, under specified conditions, produces approximations for the value $L(e_0)$ in terms of Hankel determinants constructed from the values $L(e_1), L(e_2), \dots$. Many constants of mathematical interest can be expressed as the values of integrals. Examples include the Euler-Mascheroni constant $\gamma$, the Euler-Gompertz constant $\delta$, and the Riemann-zeta constants $\zeta(k)$ for $k \ge 2$. In many cases we can use the integral representation for the constant to construct a linear functional for which $L(e_0)$ equals the given constant and $L(e_1), L(e_2), \dots$ are rational numbers. In this case, under the specified conditions, we obtain rational approximations for our constant. In particular, we execute this procedure for the previously mentioned constants $\gamma$, $\delta$, and $\zeta(k)$. We note that our approximations are not strong enough to study the arithmetic properties of these constants.
\end{abstract}

\maketitle

\smallskip
\noindent \footnotesize\textbf{Keywords.} Diophantine approximation, Hankel determinants
\\
\\
\smallskip
\noindent \footnotesize\textbf{AMS subject classifications.} 11J17, 11C20, 15B05



\section{Introduction and Main Theorem} \label{sec:intro}

Approximation of a given constant by a sequence of rational numbers is an important problem in number theory. If these approximations are ``good enough'' then they can be used to study the arithmetic properties of the constant and obtain transcendence/irrationality results. In the case that the constant appears in a sequence of integrals, Zeilberger and Zudilin \cite{2019arXiv191210381Z} have obtained methods to automate irrationality proofs. They do not construct the approximations but rather provide a context in which the strength of the already specified approximations can be proved. In our case, we will use Hankel determinants to construct approximations. Hankel determinants have many applications to approximation. Bugeaud, Han, Wen, and Yao \cite{MR3509933} used Hankel determinants to prove that a large class of numbers have irrationality exponent two. In addition, Krattenthaler, Rochev, V\"{a}\"{a}n\"{a}nen, and Zudilin \cite{MR2475693} used Hankel determinants when showing that certain $q$-exponential functions have non-quadratic values. Furthermore, Zudilin \cite{MR3619445} used Hankel determinants to obtain a new irrationality criterion. Finally, we note that although evaluating Hankel determinants can be difficult, there are many methods for doing so (see \cite{MR3005311,MR2368911,MR1701596,MR2178686}).

Now we describe the setup for our method. Suppose that $L$ is a linear functional whose domain contains all polynomials. We show that if $L$ satisfies certain conditions, then we can construct approximations for $L(e_0)$ from $L(e_1)$, $L(e_2), \dots$ where $\{e_n\}_{n=0}^\infty$ are the monomials defined by $e_n(x) := x^n$. In particular, we will define two sequences of Hankel determinants $\{P_n\}_{n=0}^\infty$ and $\{Q_n\}_{n=0}^\infty$ such that
\begin{enumerate}
\item $\lim_{n \rightarrow \infty} P_n/Q_n = L(e_0)$,
\item $P_n, Q_n \in \Z[L(e_1), \dots, L(e_{2n+2})]$ for all $n \ge 0$.
\end{enumerate}
As a special case, we obtain rational approximations for $L(e_0)$ if $L(e_n) \in \Q$ for all $n \ge 1$. We will use this observation to construct rational approximations for the Euler-Mascheroni constant $\gamma$, the Euler-Gompertz constant $\delta$, and the Riemann-zeta constants $\zeta(k)$ where
\begin{align} \label{eq:gamma}
\gamma &:= \lim_{n \rightarrow \infty} \sum_{i=1}^n \frac{1}{i} - \log n = \int_0^1 \left( \frac{1}{1-x} + \frac{1}{\log x}\right)dx,
\\ \label{eq:delta}
\delta &:= \int_0^\infty \frac{e^{-x}}{1+x} dx,
\\ \label{eq:zeta}
\zeta(k) &:= \sum_{n=1}^\infty \frac{1}{n^k} = \frac{1}{(k-1)!} \int_0^\infty \frac{x^{k-1}}{e^x-1} dx, \quad k \ge 2.
\end{align}
We again note that our rational approximations are not strong enough to study the arithmetic nature of any these constants.

We now state our main theorem.
\begin{theorem} \label{thm:main}
Let $L : \Hc \rightarrow \R$ be a linear functional where $\Hc$ is a real vector space containing $\{e_n\}_{n=0}^\infty$. Define the sequences of Hankel determinants $\{P_n\}_{n=0}^\infty$ and $\{Q_n\}_{n=0}^\infty$ by
\begin{align*}
P_n := -\det(a_{i+j})_{i,j=0}^{n+1} \quad \text{and} \quad Q_n := \det(a_{i+j+2})_{i,j=0}^n \quad \text{where} \quad a_n :=
\begin{cases}
L(e_n) & \mbox{if $n \ge 1$,}
\\
0 & \mbox{if $n = 0$,}
\end{cases}
\end{align*}
Suppose that $L((e_1 f)^2) > 0$ for all non-zero real polynomials $f$. Then $Q_n > 0$ and
\begin{align}
L(e_1)^2/L(e_2) = P_0/Q_0 \le \dots \le P_n/Q_n \le P_{n+1}/Q_{n+1} \le \dots
\end{align}
In addition, suppose that $\Hc$ is a Hilbert space with inner product $\langle f, g \rangle := L(fg)$ and that $\{e_n\}_{n=1}^\infty$ is complete in $\Hc$ i.e. if $f \in \Hc$ satisfies $\langle f,e_n \rangle = 0$ for all $n \ge 1$, then $f = 0$. Then $P_n/Q_n < L(e_0)$ and
\begin{align} \label{eq:limit}
\lim_{n \rightarrow \infty} P_n/Q_n = L(e_0).
\end{align}
\end{theorem}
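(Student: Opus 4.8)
The plan is to interpret each ratio $P_n/Q_n$ as the squared norm of an orthogonal projection and then read off all three conclusions from elementary Hilbert-space geometry. Throughout write $B(f,g) := L(fg)$ for the bilinear form and $V_n := \spn\{e_1, \dots, e_{n+1}\}$. First I would record that the hypothesis $L((e_1 f)^2) > 0$ for nonzero $f$ makes $B$ positive definite on $V_n$: any $h \in V_n \setminus \{0\}$ can be written $h = e_1 g$ with $g$ a nonzero polynomial, so $B(h,h) = L((e_1 g)^2) > 0$. Since $\det(a_{i+j+2})_{i,j=0}^n$ is exactly the Gram determinant of $\{e_1, \dots, e_{n+1}\}$ with respect to $B$ (because $B(e_{i+1}, e_{j+1}) = a_{i+j+2}$), positive definiteness gives $Q_n > 0$.

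Next I define $p_n \in V_n$ by the normal equations $B(e_0 - p_n, e_j) = 0$ for $j = 1, \dots, n+1$, which are solvable because the coefficient matrix is the nonsingular Gram matrix above. Writing $p_n = \sum_{k=1}^{n+1} c_k e_k$, the system reads $\sum_k a_{k+j} c_k = a_j$, and $B(p_n, e_0) = \sum_k c_k a_k$ since $e_k e_0 = e_k$. The key computation is a bordered-determinant (Schur complement) identity: bordering the Gram matrix $A = (a_{i+j})_{i,j=1}^{n+1}$ by the vector $b = (a_j)_{j=1}^{n+1}$ and the corner $a_0 = 0$ gives
\[
\det \begin{pmatrix} 0 & b^{\mathsf T} \\ b & A \end{pmatrix} = \det(A)\,\bigl(0 - b^{\mathsf T} A^{-1} b\bigr) = -\,Q_n\, B(p_n, e_0).
\]
The left-hand determinant equals $\det(a_{i+j})_{i,j=0}^{n+1} = -P_n$, so $P_n/Q_n = B(p_n, e_0) = B(p_n, p_n)$, the last step because $e_0 - p_n \perp_B V_n$ while $p_n \in V_n$. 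In particular $P_n/Q_n = \|p_n\|_B^2 \ge 0$, and the base case $P_0/Q_0 = a_1^2/a_2 = L(e_1)^2/L(e_2)$ is immediate.

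Monotonicity then follows from the nesting $V_n \subset V_{n+1}$: subtracting the normal equations shows $B(p_{n+1} - p_n, v) = 0$ for all $v \in V_n$, hence $B(p_{n+1}-p_n, p_n) = 0$ and
\[
P_{n+1}/Q_{n+1} = \|p_{n+1}\|_B^2 = \|p_n\|_B^2 + \|p_{n+1} - p_n\|_B^2 \ge \|p_n\|_B^2 = P_n/Q_n,
\]
the cross term vanishing and the remaining term being nonnegative since $p_{n+1} - p_n \in V_{n+1}$. For the final statements I pass to the Hilbert space $\Hc$, where $p_n$ is the genuine orthogonal projection of $e_0$ onto $V_n$ and $\|e_0\|^2 = L(e_0^2) = L(e_0)$, so the Pythagorean identity gives $P_n/Q_n = L(e_0) - \|e_0 - p_n\|^2$. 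Since $e_0 - p_n$ is a nonzero element of $\Hc$ (a polynomial with constant term $1$, whereas every element of $V_n$ has none) and the inner product is positive definite, $\|e_0 - p_n\|^2 > 0$, whence $P_n/Q_n < L(e_0)$. Finally, completeness of $\{e_n\}_{n=1}^\infty$ means $\overline{\spn\{e_1, e_2, \dots\}} = \Hc$, so the distance from $e_0$ to $V_n$ tends to $0$; thus $\|e_0 - p_n\|^2 \to 0$ and $P_n/Q_n \to L(e_0)$. I expect the main obstacle to be the determinant identity $P_n = Q_n \|p_n\|_B^2$ — getting the Schur-complement bordering and the sign conventions right — together with justifying the density step cleanly from the stated completeness hypothesis.
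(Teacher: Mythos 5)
Your proof is correct, and it rests on the same geometric idea as the paper's --- identifying $P_n/Q_n$ with the squared norm of the orthogonal projection of $e_0$ onto $\spn\{e_1,\dots,e_{n+1}\}$ --- but the execution differs in two places. For the determinant identity, the paper first constructs orthogonal polynomials $q_0,q_1,\dots$ for the form $(f,g)\mapsto L(e_2fg)$, sets $p_i:=e_1q_{i-1}$, and changes basis so that the bordered Gram matrix becomes diagonal except for its first row and column; a small lemma then yields the explicit telescoping formula $P_n/Q_n=\sum_{i=1}^{n+1}L(p_i)^2/L(p_i^2)$. You instead apply the general Schur-complement formula directly to the bordered Gram matrix in the monomial basis, solving the normal equations $Ac=b$; this avoids constructing orthogonal polynomials but gives $P_n/Q_n=\|p_n\|_B^2$ without an explicit series. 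For the limit, the paper sums its series via Parseval's identity, whereas you argue that completeness forces $\mathrm{dist}(e_0,V_n)\to 0$; these are equivalent. The paper's route buys an explicit nonnegative-term series for the error $L(e_0)-P_n/Q_n$ (potentially useful for quantitative estimates), while yours is shorter and makes the monotonicity transparent from nested subspaces. One point to note: your strict-inequality step requires that $e_0-p_n$ be a nonzero \emph{element of $\Hc$}, not merely a formally nonzero polynomial --- i.e.\ that $e_0$ does not coincide in $\Hc$ with a polynomial divisible by $e_1$. The paper's proof relies on exactly the same implicit fact (it declares $e_0=\sum_{i=1}^{n+1}\frac{L(p_i)}{L(p_i^2)}p_i$ a contradiction ``since each $p_i$ is divisible by $e_1$''), and in the concrete examples it holds because the weight $K$ is positive almost everywhere, so this is not a gap relative to the paper.
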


Our main theorem and its applications are exercises in Hilbert space theory (see \cite{MR949693} for relevant results). We defer the proof of Theorem \ref{thm:main} to the appendix. In Section \ref{sec:examples} we apply Theorem \ref{thm:main} to construct rational approximations for the constants $\gamma$, $\delta$, and $\zeta(k)$ in \eqref{eq:gamma}, \eqref{eq:delta}, and \eqref{eq:zeta}.



\section{Examples} \label{sec:examples}

In this section we apply Theorem \ref{thm:main} to the constants $\gamma$, $\delta$, and $\zeta(k)$ for $k \ge 2$. In each case we require Lemma \ref{lem:hilbert} to check the hypothesis of Theorem \ref{thm:main}. We defer the proof of Lemma \ref{lem:hilbert} to the appendix.

\begin{lemma} \label{lem:hilbert}
Let $\Omega$ be an interval of the form $[a,b]$ or $[a,\infty)$ and let $K : \Omega \rightarrow \R$ be a function satisfying the following properties:
\begin{enumerate}
\item $K$ is Lebesgue integrable and positive almost everywhere,
\item If $\Omega = [a,\infty)$, then there exists an $\epsilon > 0$ such that $K(y) = O(e^{-\epsilon y})$ as $y \rightarrow \infty$.
\end{enumerate}
Let $\Hc$ be the set of Lebesgue measurable functions $f : \Omega \rightarrow \R$ such that $\int_\Omega f(y)^2 K(y) dy < \infty$. Then $\Hc$ satisfies the following properties:
\begin{enumerate}
\item $\Hc$ is a Hilbert space with inner product $\langle f,g \rangle := \int_\Omega f(y) g(y) K(y) dy$,
\item $\Hc$ contains $\{e_0 \}_{n=0}^\infty$,
\item $\{e_n\}_{n=m}^\infty$ is complete in $\Hc$ for every $m \ge 0$.
\end{enumerate}
\end{lemma}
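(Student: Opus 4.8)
The plan is to establish the three claimed properties of $\Hc$ in order, treating the two cases $\Omega = [a,b]$ and $\Omega = [a,\infty)$ uniformly where possible. The first property (that $\Hc$ is a Hilbert space) is the standard fact that $L^2$ of a measure space is complete. Specifically, I would let $\mu$ be the measure $d\mu = K(y)\,dy$ on $\Omega$; since $K$ is Lebesgue integrable and positive almost everywhere, $\mu$ is a finite Borel measure with the same null sets as Lebesgue measure. Then $\Hc$ is precisely $L^2(\Omega, \mu)$, so completeness and the inner-product structure are inherited from the general Riesz–Fischer theorem. The only mild subtlety is that elements of $L^2$ are equivalence classes modulo $\mu$-null sets, but because $K > 0$ almost everywhere, $\mu$-null and Lebesgue-null coincide, so the positivity requirement in the hypothesis of Theorem~\ref{thm:main} (namely $L((e_1 f)^2) > 0$ for nonzero $f$) is consistent with this identification.

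For the second property, that every monomial $e_n$ lies in $\Hc$, I must check $\int_\Omega y^{2n} K(y)\,dy < \infty$. On a bounded interval $[a,b]$ this is immediate since $y^{2n}$ is bounded and $K$ is integrable. On $[a,\infty)$ I would invoke condition~(2): the exponential decay bound $K(y) = O(e^{-\epsilon y})$ dominates the polynomial growth of $y^{2n}$, so the tail integral $\int_M^\infty y^{2n} e^{-\epsilon y}\,dy$ converges, and the integral over the bounded part $[a,M]$ is finite as before. This shows $\{e_n\}_{n=0}^\infty \subseteq \Hc$, and in fact gives that all moments $\int_\Omega y^n K(y)\,dy$ are finite, a fact I expect to reuse.

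\textbf{The hard part will be the third property}, completeness of $\{e_n\}_{n=m}^\infty$ in $\Hc$. First I would reduce to the case $m=0$: if a function $g$ is orthogonal to $e_m, e_{m+1}, \dots$, then $g$ is orthogonal to $e_n$ for all $n \ge m$, but one can absorb the lower monomials by noting that $\langle g, e_n\rangle = \langle y^{m} g, e_{n-m}\rangle$ computed against the same kernel, or more cleanly, show directly that orthogonality to the tail forces $g = 0$. The standard route is to prove that the polynomials are dense in $L^2(\mu)$ and that orthogonality to all $e_n$ (equivalently, vanishing of all moments of $g\,K$) forces $g=0$. On a bounded interval this follows from the Weierstrass approximation theorem: polynomials are dense in $C[a,b]$ uniformly, hence dense in $L^2(\mu)$, so a function orthogonal to all polynomials is zero. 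On the unbounded interval $[a,\infty)$ the density of polynomials in $L^2(\mu)$ is precisely the determinacy question for the moment problem, and this is where the exponential decay hypothesis is essential. The cleanest argument I would pursue uses the exponential bound to show the moment sequence satisfies a Carleman-type growth condition, or alternatively to show that the analytic function $F(z) := \int_\Omega g(y) e^{zy} K(y)\,dy$ is well-defined and holomorphic on a strip $\{\re z < \epsilon\}$; vanishing of all moments $\int g(y) y^n K(y)\,dy = 0$ forces all derivatives of $F$ at $0$ to vanish, so $F \equiv 0$ on the strip, and then inverting the (two-sided Laplace / Fourier) transform yields $g\,K = 0$ almost everywhere, hence $g = 0$ since $K > 0$. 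Establishing that $F$ is genuinely holomorphic on an open neighborhood of the imaginary axis—so that the Fourier-uniqueness step applies—is the technical crux, and it is exactly here that the $O(e^{-\epsilon y})$ decay does the work by guaranteeing convergence of $\int g(y) e^{zy} K(y)\,dy$ for $\re z$ slightly positive.
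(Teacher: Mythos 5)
Your proposal is correct and takes essentially the same route as the paper: properties (1) and (2) are dispatched as standard facts, and for (3) the paper likewise absorbs the offset $m$ by multiplying the orthogonal function by $y^m$, uses Cauchy--Schwarz together with the $O(e^{-\epsilon y})$ decay to show the associated Laplace transform is analytic near the imaginary axis with all Taylor coefficients at $0$ vanishing, and concludes by uniqueness of the Laplace transform. The only cosmetic deviations are that you treat the bounded case separately via Weierstrass approximation where the paper's transform argument covers both cases uniformly, and that Cauchy--Schwarz doubles the exponent, so your strip of analyticity should be $\re z < \epsilon/2$ rather than $\re z < \epsilon$ (which does not affect the argument).
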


Now we begin our examples.

\begin{example}[Euler-Mascheroni constant $\gamma$]
Define the linear functional
\begin{align*}
L_\gamma(f) := & \int_0^1 \left( \frac{1}{x} + \frac{1}{\log (1-x)} \right) f(-x \log (1-x)) dx.
\end{align*}
Then $L_\gamma(e_0) = \gamma$ and
\begin{align*}
L_\gamma(e_n) = (-1)^n \int_0^1 x^{n-1} (\log (1-x))^n + x^n (\log (1-x))^{n-1} dx = (n-1)! \sum_{i=0}^n \binom{n}{i} (-1)^i \frac{n-2i-1}{(i+1)^{n+1}} \in \Q
\end{align*}
for all $n \ge 1$. Therefore $P_n, Q_n \in \Q$.

We note that the function $\sigma(x) := -x \log (1-x)$ is smooth and strictly increasing on the interval $[0,1)$ since $\sigma'(x) = -\log(1-x) + x/(1-x)$ is the sum of positive terms. Therefore we can make the change of variables $y = \sigma(x)$ and conclude that
\begin{align*}
L_\gamma(f) = \int_0^\infty f(y) K_\gamma(y) dy \quad \text{where} \quad K_\gamma(y) := \left( \frac{1}{\sigma^{-1}(y)}  + \frac{1}{\log(1-\sigma^{-1}(y))} \right) \frac{1-\sigma^{-1}(y)}{\sigma^{-1}(y) - (1-\sigma^{-1}(y))\log \sigma^{-1}(y)}.
\end{align*}
Since $1/x+1/\log(1-x)$ is integrable and positive on $[0,1]$ we see that $K_\gamma$ is integrable and positive on $\Omega_\gamma := [0,\infty)$.

Therefore we can apply Lemma \ref{lem:hilbert} as soon as we check the asymptotics of $K(y)$ as $y \rightarrow \infty$. Since $\lim_{y \rightarrow \infty} \sigma^{-1}(y) = 1$ we see that
\begin{align*}
\lim_{y \rightarrow \infty}  1/\sigma^{-1}(y)  + 1/\log(1- \sigma^{-1}(y)) = \lim_{y \rightarrow \infty} \sigma^{-1}(y) - (1-\sigma^{-1}(y))\log \sigma^{-1}(y) = 1.
\end{align*}
Therefore $K_\gamma(y)$ has the same asymptotics as $1-\sigma^{-1}(y)$. To determine the asymptotics of $\sigma^{-1}(y)$ we set $x = \sigma^{-1}(y) = 1 - e^{-y} R(y)$ and note that $0 \le e^{-y} R(y) \le 1$. Now plugging this ansatz into the equation $y = \sigma(x) = -x \log(1-x)$ we get the inequality
\begin{align*}
\log R(y) = -\frac{ye^{-y} R(y)}{1- e^{-y} R(y)} \le 0
\end{align*}
from which we conclude that $0 \le R(y) \le 1$. Therefore $1 - \sigma^{-1}(y) = O(e^{-y})$ hence $K_\gamma(y) = O(e^{-y})$ as $y \rightarrow \infty$. Therefore we can apply Lemma \ref{lem:hilbert} and make the corresponding definition of $\Hc_\gamma$.

Therefore by Theorem \ref{thm:main} we conclude that $0 < P_n/Q_n < \gamma$ and
\begin{align*}
0 < P_0/Q_0 \le \dots \le P_n/Q_n \uparrow \gamma \quad \text{as} \quad n \rightarrow \infty.
\end{align*}
We display data for our approximations $P_n/Q_n$ in Table \ref{tab:gammagompertz}.
\end{example}

\begin{example}[Euler-Gompertz constant $\delta$]
Define the linear functional
\begin{align*}
L_\delta(f) := \int_0^\infty f(x+1) \frac{e^{-x}}{x+1}dx.
\end{align*}
Then $L_\delta(e_0) = \delta$ and
\begin{align*}
L_\delta(e_n) = \int_0^\infty (x+1)^{n-1} e^{-x} dx = \sum_{i=0}^{n-1} \frac{(n-1)!}{i!} \in \Z
\end{align*}
for all $n \ge 1$. Therefore $P_n, Q_n \in \Z$.

With the change of variables $y = x+1$ we get that
\begin{align*}
L_\delta(f) = \int_1^\infty f(y) K_\delta(y) dy \quad \text{where} \quad K_\delta(y) := \frac{e^{-y+1}}{y}.
\end{align*}
Clearly $K_\delta$ satisfies the hypothesis of Lemma \ref{lem:hilbert} with $\Omega_\delta = [1,\infty)$, and we can define $\Hc_\delta$ as in Lemma \ref{lem:hilbert}. Therefore by Theorem \ref{thm:main} we conclude that $0 < P_n/Q_n < \delta$ and
\begin{align*}
0 < P_0/Q_0 \le \dots \le P_n/Q_n \uparrow \delta \quad \text{as} \quad n \rightarrow \infty.
\end{align*}
We display data for our approximations $P_n/Q_n$ in Table \ref{tab:gammagompertz}.
\end{example}

\begin{example}[Riemann-zeta constants $\zeta(k)$]
For $k \ge 2$ define the linear functional
\begin{align*}
L_{\zeta,k}(f) := \frac{1}{(k-1)!} \int_0^\infty \frac{x^{k-1}}{e^x-1} f(1-e^{-x})dx.
\end{align*}
Then $L_{\zeta,k}(e_0) = \zeta(k)$ and
\begin{align*}
L_{\zeta,k}(e_n) = \frac{1}{(k-1)!} \int_0^\infty x^{k-1} (1-e^{-x})^{n-1} e^{-x} dx = \sum_{i=0}^{n-1} \binom{n-1}{i} \frac{(-1)^i}{(i+1)^k} \in \Q
\end{align*}
for all $n \ge 1$. Therefore $P_n, Q_n \in \Q$. We also note that
\begin{align*}
L_{\zeta,k}(e_n) = \underbrace{\int_0^1 \dots \int_0^1}_{\text{$k$ times}} \frac{(1 - x_1 \dots x_k)^n}{1 - x_1 \dots x_k} dx_1 \dots dx_k
\end{align*}
for all $n \ge 0$.

With the change of variable $y = 1-e^{-x}$ we get that
\begin{align*}
L_{\zeta,k}(f) = \int_0^1 f(y) \frac{(-\log(1-y))^{k-1}}{(k-1)! y}dy = \int_0^1 f(y) K_{\zeta,k}(y)dy \quad \text{where} \quad K_{\zeta,k}(y) := \frac{(-\log(1-y))^{k-1}}{(k-1)! y}.
\end{align*}
Clearly $K_{\zeta,k}$ satisfies the hypothesis of Lemma \ref{lem:hilbert} with $\Omega_{\zeta,k} := [0,1]$, and we can define $\Hc_{\zeta,k}$ as in Lemma \ref{lem:hilbert}. Therefore by Theorem \ref{thm:main} we conclude that $0 < P_n/Q_n < \zeta(k)$ and
\begin{align*}
0 < P_0/Q_0 \le \dots \le P_n/Q_n \uparrow \zeta(k) \quad \text{as} \quad n \rightarrow \infty.
\end{align*}
We display data for our approximations $P_n/Q_n$ in Table \ref{tab:zeta}.
\end{example}

\begin{table}[H]
\renewcommand{\arraystretch}{1.5}
\begin{tabular}{|c|c|c|c|c|}
\hline
$n$ & \multicolumn{2}{|c|}{$P_n/Q_n$ for $\gamma$} & \multicolumn{2}{|c|}{$P_n/Q_n$ for $\delta$}
\\
\hline
0 & $\frac{9}{41}$ & 0.2195121951 & $\frac{1}{2}$ & 0.5000000000
\\
\hline
1 & $\frac{627726506}{2084484569}$ & 0.3011423137 & $\frac{4}{7}$ & 0.5714285714
\\
\hline
2 & - & 0.3457225856 & $\frac{10}{17}$ & 0.5882352941
\\
\hline
3 & - & 0.3745360864 & $\frac{124}{209}$ & 0.5933014354
\\
\hline
4 & - & 0.3950172588 & $\frac{460}{773}$ & 0.5950840880
\\
\hline
5 & - & 0.4104941483 & $\frac{7940}{13327}$ & 0.5957829969
\\
\hline
6 & - & 0.4226993663 & $\frac{39020}{65461}$ & 0.5960801088
\\
\hline
7 & - & 0.4326321010 & $\frac{859580}{1441729}$ & 0.5962146839
\\
\hline
8 & - & 0.4409129928 & $\frac{748420}{1255151}$ & 0.5962788541
\\
\hline
9 & - & 0.4479499436 & $\frac{139931620}{234662231}$ & 0.5963107885
\\
\hline
10 & - & 0.4540232182 & $\frac{1015353820}{1702678841}$ & 0.5963272671
\\
\hline
11 & - & 0.4593324215 & $\frac{31805257340}{53334454417}$ & 0.5963360400
\\
\hline
12 & - & 0.4640239850 & $\frac{267257395340}{448162154317}$ & 0.5963408395
\\
\hline
13 & - & 0.4682080352 & $\frac{9591325648580}{16083557845279}$ & 0.5963435293
\\
\hline
14 & - & 0.4719691667 & $\frac{8317039567460}{13946689584823}$ & 0.5963450693
\\
\hline
15 & - & 0.4753735569 & $\frac{75451991521660}{126523856174033}$ & 0.5963459683
\\
\hline
17 & - & 0.4813123036 & $\frac{160957871380291180}{269906478537389909}$ & 0.5963468245
\\
\hline
19 & - & 0.4863363761 & $\frac{60588676286095139260}{101599675414361566913}$ & 0.5963471442
\\
\hline
21 & - & 0.4906573284 & $\frac{714785218276618032951940}{1198605668577020653881647}$ & 0.5963472700
\\
\hline
23 & - & 0.4944242261 & - & 0.5963473218
\\
\hline
25 & - & 0.4977454856 & - & 0.5963473439
\\
\hline
\end{tabular}
\renewcommand{\arraystretch}{1}
\caption{\label{tab:gammagompertz} Approximants $P_n/Q_n$ for the Euler-Mascheroni constant $\gamma = 0.5772156649...$ and the Euler-Gompertz constant $\delta = 0.5963473623...$.}
\end{table}

\begin{table}[H]
\renewcommand{\arraystretch}{1.5}
\begin{tabular}{|c|c|c|c|c|}
\hline
$n$ & \multicolumn{2}{|c|}{$P_n/Q_n$ for $\zeta(2)$} & \multicolumn{2}{|c|}{$P_n/Q_n$ for $\zeta(3)$}
\\
\hline
0 & $\frac{4}{3}$ & 1.333333333 & $\frac{8}{7}$ & 1.142857143
\\
\hline
1 & $\frac{135}{89}$ & 1.516853933 & $\frac{4887}{4105}$ & 1.190499391
\\
\hline
2 & $\frac{505319}{320733}$ & 1.575512966 & $\frac{13305034871}{11102509809}$ & 1.198380826
\\
\hline
3 & $\frac{1337517425}{835187004}$ & 1.601458618 & $\frac{2196507603137550625}{1829598054203124216}$ & 1.200541069
\\
\hline
4 & $\frac{26920197674520019}{16667096529827700}$ & 1.615170202 & - & 1.201321520
\\
\hline
5 & $\frac{4108034695656989506227}{2530690380879633004100}$ & 1.623286170 & - & 1.201657975
\\
\hline
6 & - & 1.628483935 & - & 1.201822087
\\
\hline
7 & - & 1.632011765 & - & 1.201909799
\\
\hline
8 & - & 1.634515372 & - & 1.201960105
\\
\hline
9 & - & 1.636356043 & - & 1.201990623
\\
\hline
10 & - & 1.637748743 & - & 1.202010004
\\
\hline
11 & - & 1.638827873 & - & 1.202022790
\\
\hline
12 & - & 1.639680964 & - & 1.202031499
\\
\hline
13 & - & 1.640367005 & - & 1.202037598
\\
\hline
14 & - & 1.640926928 & - & 1.202041971
\\
\hline
15 & - & 1.641389854 & - & 1.202045173
\\
\hline
17 & - & 1.642103939 & - & 1.202049371
\\
\hline
19 & - & 1.642622098 & - & 1.202051847
\\
\hline
21 & - & 1.643009963 & - & 1.202053385
\\
\hline
23 & - & 1.643307821 & - & 1.202054380
\\
\hline
25 & - & 1.643541511 & - & 1.202055046
\\
\hline
\end{tabular}
\renewcommand{\arraystretch}{1}
\caption{\label{tab:zeta} Approximants for the Riemann-zeta constants $\zeta(2) = 1.644934067...$ and $\zeta(3) = 1.202056903...$.}
\end{table}

We finish this section with an example which only satisfies the first assumption of Theorem \ref{thm:main} but not the second set of assumptions. We then verify that the conclusions after the first assumption hold but not the conclusions after the  second set of assumptions. Define the linear functional
\begin{align*}
\Lt_\gamma(f) := -\int_0^\infty (\log x) \frac{d}{dx} (f(x) e^{-x}) dx.
\end{align*}
Then $\Lt_\gamma(e_0) = \int_0^\infty (\log x) e^{-x} dx = -\gamma$ and
\begin{align*}
\Lt_\gamma(e_n) = -\int_0^\infty (\log x) \frac{d}{dx} (x^n e^{-x}) dx = -(\log x) x^n e^{-x} \biggr\rvert_0^\infty + \int_0^\infty x^{n-1} e^{-x} dx = (n-1)!
\end{align*}
for all $n \ge 1$. Similarly,
\begin{align*}
\Lt_\gamma((e_1f)^2) &= -\int_0^\infty (\log x) \frac{d}{dx} (x^2 f(x)^2 e^{-x}) dx
\\
&= -(\log x)x^2f(x)^2 e^{-x} \biggr\rvert_0^\infty + \int_0^\infty xf(x)^2 e^{-x} dx = \int_0^\infty x f(x)^2 e^{-x} dx > 0
\end{align*}
for all non-zero real polynomials $f$.

Therefore the first assumption of Theorem \ref{thm:main} holds. Now using the formula for $\Lt_\gamma(e_n)$ for $n \ge 1$ we check via computer that $P_n, Q_n > 0$ and $P_n/Q_n = \sum_{i=1}^{n+1} 1/i$ for $0 \le n \le 50$. (We conjecture that this identity and inequalities hold for all $n \ge 0$ but do not prove it.) This numerically verifies that the conclusions following the first assumption in Theorem \ref{thm:main} hold.

But the second set of assumptions of Theorem \ref{thm:main} fail. This is because $\langle e_0, e_0 \rangle = \Lt_\gamma(e_0^2) = -\gamma < 0$ so that $\langle f,g \rangle = \Lt_\gamma(fg)$ cannot define an inner product on any vector space $\Hc$ containing $\{e_n\}_{n=0}^\infty$. Based off of our conjecture we clearly deduce that $P_n/Q_n \ge \Lt_\gamma(e_0)$ for all $n \ge 0$ and that $\lim_{n \rightarrow \infty} P_n/Q_n$ doesn't even exist since the harmonic series diverges. This numerically verifies that the conclusions following the second set of assumptions in Theorem \ref{thm:main} fail.



\section{Conclusion} \label{sec:conclusion}

We described a method by which the values $L(e_1), L(e_2), \dots$ of a linear functional can be used to approximate $L(e_0)$ under specified conditions. In particular, we use the values $L(e_1), L(e_2), \dots$ to construct two sequences of Hankel determinants $P_n$ and $Q_n$ for which $\lim_{n \rightarrow \infty} P_n/Q_n = L(e_0)$. We then applied our method to construct rational approximations for the Euler-Mascheroni constant $\gamma$, the Euler-Gompertz constant $\delta$, and the Riemann-zeta constants $\zeta(k)$ for $k \ge 2$.



\section{Acknowledgements}

The author gratefully acknowledges support as a postdoctoral associate at Arizona State University.



\section{Appendix}

In this section we prove Theorem \ref{thm:main} and Lemma \ref{lem:hilbert}. But before we prove Theorem \ref{thm:main} we first need Lemma \ref{lem:det} which we now state and prove.

\begin{lemma} \label{lem:det}
Let $A_n = (a_{i,j})_{i,j=0}^n$ be a matrix such that $a_{i,j} = 0$ if $i \ne j$ and $i,j > 0$. Then
\begin{align*}
\det(A_n) = \left( \prod_{i=1}^n a_{i,i} \right)\left(a_{0,0} - \sum_{i=1}^n \frac{a_{i,0} a_{0,i}}{a_{i,i}} \right).
\end{align*}
\end{lemma}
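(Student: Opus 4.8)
The plan is to recognize $A_n$ as an \emph{arrowhead matrix} and reduce it to triangular form by elementary row operations. The hypothesis that $a_{i,j} = 0$ whenever $i \ne j$ and $i,j > 0$ means that the lower-right $n \times n$ block (rows and columns $1$ through $n$) is diagonal with diagonal entries $a_{1,1}, \dots, a_{n,n}$, and that the only other possibly-nonzero entries lie in the $0$th row and the $0$th column. Since the claimed formula divides by each $a_{i,i}$, I would work under the (implicit) assumption that $a_{i,i} \ne 0$ for $1 \le i \le n$, which is exactly the regime in which the right-hand side is defined.

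First I would clear the off-diagonal entries of the top row. For each $i$ from $1$ to $n$, row $i$ has nonzero entries only in columns $0$ and $i$ (namely $a_{i,0}$ and $a_{i,i}$), so the operation $R_0 \mapsto R_0 - (a_{0,i}/a_{i,i}) R_i$ replaces the $(0,i)$ entry by $0$, subtracts $(a_{0,i}/a_{i,i})a_{i,0}$ from the $(0,0)$ entry, and leaves every other entry of row $0$ untouched. Because these operations alter only row $0$ and each one targets a distinct column $i$, they do not interfere with one another; performing them for all $i$ turns row $0$ into $(c, 0, \dots, 0)$, where
\[
c = a_{0,0} - \sum_{i=1}^n \frac{a_{i,0}\, a_{0,i}}{a_{i,i}}.
\]

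After these operations the matrix is lower triangular, with diagonal entries $c, a_{1,1}, \dots, a_{n,n}$. Since adding a multiple of one row to another leaves the determinant unchanged, and the determinant of a triangular matrix is the product of its diagonal entries, I obtain
\[
\det(A_n) = c \prod_{i=1}^n a_{i,i} = \left( \prod_{i=1}^n a_{i,i} \right)\left( a_{0,0} - \sum_{i=1}^n \frac{a_{i,0}\, a_{0,i}}{a_{i,i}} \right),
\]
as claimed. There is no serious obstacle here; the only point requiring care is to confirm that the $n$ row operations act on disjoint target columns, so that the reduced top row is exactly $(c, 0, \dots, 0)$ and the resulting matrix is genuinely triangular. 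As an alternative one could expand $\det(A_n)$ along the first row and evaluate the deleted-column minors directly, but tracking the cofactor signs is more delicate than the direct reduction, so I would prefer the row-operation argument.
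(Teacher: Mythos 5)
Your proof is correct, but it takes a different route from the paper. The paper argues by induction on $n$: it expands $\det(A_{n+1})$ by cofactors along the last row and then the last column, obtaining the recursion $\det(A_{n+1}) = a_{n+1,n+1}\det(A_n) - a_{n+1,0}\,a_{0,n+1}\prod_{i=1}^{n} a_{i,i}$, and then folds this into the closed form using the inductive hypothesis. You instead treat $A_n$ as an arrowhead matrix and perform a single pass of row reduction, clearing the top row against the diagonal block; this yields the formula in one step with no induction. Both arguments are elementary and short. Your version is arguably more transparent about where the expression $a_{0,0} - \sum_i a_{i,0}a_{0,i}/a_{i,i}$ comes from (it is literally the pivot left in the $(0,0)$ position), and you are right to flag that the row operations require $a_{i,i} \ne 0$ --- though since the stated formula already divides by the $a_{i,i}$, this assumption is implicit in the lemma itself, and in the paper's application one has $a_{i,i} = L(p_i^2) > 0$, so nothing is lost. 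The paper's cofactor recursion, by contrast, is valid verbatim even with vanishing diagonal entries until the final division is performed. Your verification that the $n$ operations act on disjoint target columns and leave rows $1,\dots,n$ untouched is exactly the point that needs checking, and you have checked it.
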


\begin{proof}[Proof of Lemma \ref{lem:det}]
The result trivially holds for $n = 0$ so suppose that it holds for some $n \ge 0$. Then expanding by cofactors along the last row and then the last column of $A_{n+1}$ we get that
\begin{align*}
\det(A_{n+1}) = a_{n+1,n+1} \det(A_n) - a_{n+1,0} a_{0,n+1} \prod_{i=0}^n a_{i,i} = \left( \prod_{i=1}^{n+1} a_{i,i} \right)\left(a_{0,0} - \sum_{i=1}^{n+1} \frac{a_{i,0} a_{0,i}}{a_{i,i}} \right)
\end{align*}
by the inductive hypothesis.
\end{proof}

\begin{proof}[Proof of Theorem \ref{thm:main}]
Define the modified Hankel determinants
\begin{align*}
\Pt_n := -\det(L(e_{i+j}))_{i,j=0}^{n+1} = P_n - L(e_0) Q_n \quad \text{and} \quad \Qt_n := \det(L(e_{i+j+2}))_{i,j=0}^n = Q_n.
\end{align*}
Now since $L(e_2 f^2) > 0$ for all non-zero real polynomial $f$ we know that the matrix $(L( e_{i+j+2}))_{i,j=0}^n$ is positive definite. Therefore $\Qt_n = Q_n > 0$ for all $n \ge 0$. It is well-known that this guarantees the existence of a sequence of polynomials $\{ q_n \}_{n=0}^\infty$ where $q_n$ is a monic polynomial of degree $n$ and
\begin{align*}
L(e_2 q_i q_j)
\begin{cases}
> 0 & \mbox{if $i = j$,}
\\
= 0 & \mbox{if $i \ne j$.}
\end{cases}
\end{align*}
Now define the new sequence of polynomials $\{ p_n \}_{n=0}^\infty$ by
\begin{align*}
p_n :=
\begin{cases}
e_1 q_{n-1} & \mbox{if $n \ge 1$,}
\\
e_0 & \mbox{if $n = 0$.}
\end{cases}
\end{align*}
Note again that $p_n$ is a monic polynomial of degree $n$ and that $L(p_ip_j) = 0$ if $i \ne j$ and $i,j > 0$. Therefore
\begin{align*}
\Qt_n = \det(L(e_{i+j+2}))_{i,j=0}^n = \det(L(e_2 q_i q_j))_{i,j=0}^n = \prod_{i=0}^n L(e_2 q_i^2) = \prod_{i=1}^{n+1} L(p_i^2)
\end{align*}
and by Lemma \ref{lem:det}
\begin{align*}
\Pt_n = -\det(L(e_{i+j}))_{i,j=0}^{n+1} = -\det(L(p_i p_j))_{i,j=0}^{n+1} = -\Qt_n \left( L(e_0) - \sum_{i=1}^{n+1} \frac{L(p_i)^2}{L(p_i^2)} \right)
\end{align*}
from which we conclude that
\begin{align*}
\frac{P_n}{Q_n} = \frac{\Pt_n}{\Qt_n} + L(e_0) = \sum_{i=1}^{n+1} \frac{L(p_i)^2}{L(p_i^2)} \ge 0
\end{align*}
is monotonically increasing. This completes the first part of the proof.

For the second part, we note that $\{p_n/\sqrt{L(p_n^2)} \}_{n=1}^\infty$ is an orthonormal basis for $\Hc$ since $\{ e_n\}_{n=1}^\infty$ is complete. Therefore
\begin{align*}
L(e_0) = \|e_0\|^2 = \sum_{i=1}^\infty \biggr\rvert \biggr\langle e_0, \frac{p_i}{\sqrt{L(p_i^2)}} \biggr\rangle \biggr\rvert^2 = \sum_{i=1}^\infty \frac{L(p_i)^2}{L(p_i^2)}
\end{align*}
by Parseval's identity. Now suppose that $P_n/Q_n = L(e_0)$ for some $n \ge 0$. Then $L(p_i) = 0$ for $i > n+1$ and
\begin{align*}
e_0 = \sum_{i=1}^\infty \biggr\langle e_0, \frac{p_i}{\sqrt{L(p_i^2)}} \biggr\rangle \frac{p_i}{\sqrt{L(p_i^2)}} = \sum_{i=1}^\infty \frac{L(p_i)}{L(p_i^2)} p_i = \sum_{i=1}^{n+1} \frac{L(p_i)}{L(p_i^2)} p_i
\end{align*}
which is a contradiction since each $p_i$ is divisible by $e_1$ for $i \ge 1$.
\end{proof}

\begin{proof}[Proof of Lemma \ref{lem:hilbert}]
We only prove (3) since (1) and (2) follow by standard arguments. To demonstrate that $\{e_n\}_{n=m}^\infty$ is dense in $\Hc$ we show that $f = 0$ is the only $f \in \Hc$ such that $\langle f, e_n \rangle = 0$ for all $n \ge m$. Now for any $z$ (we require $|z| < \epsilon/2$ if $\Omega$ is unbounded) and $k \ge 0$ we have that
\begin{align*}
\left( \int_\Omega |f(y)| y^k e^{|z|y} K(y) dy \right)^2 \le \left( \int_\Omega |f(y)|^2 K(y) dy \right) \left( \int_\Omega y^{2k} e^{2|z|y} K(y) dy \right) < \infty
\end{align*}
by the Cauchy-Schwarz inequality. Therefore by the dominated convergence theorem we have that
\begin{align*}
F(z) = \int_\Omega f(y) y^m e^{-zy} K(y) dy = \sum_{n=0}^\infty \frac{(-z)^n}{n!} \int_\Omega f(y) y^{n+m} K(y) dy = 0.
\end{align*}
If $\Omega$ is bounded, then $F(z) = 0$ for all $z$. If $\Omega$ is unbounded, then another application of the dominated convergence theorem shows that $F(z)$ is analytic for $\re z > 0$ so that $F(z) = 0$ for $\re z > 0$ by analytic continuation. Define $\ft : [0,\infty) \rightarrow \R$ by
\begin{align*}
\ft(y) :=
\begin{cases}
f(y+a) (y+a)^m e^{-ay} K(y+a) & \mbox{if $y + a \in \Omega$,}
\\
0 & \mbox{if $y + a \notin \Omega$.}
\end{cases}
\end{align*}
Then $\ft$ is an integrable function whose Laplace transform $\Ft(z) := \int_0^\infty \ft(y) e^{-zy} dy = 0$ for $\re z > 0$. Therefore $\ft$ and hence $f$ is zero almost everywhere.
\end{proof}



\bibliography{RationalApproximationsViaHankelDeterminants}
\bibliographystyle{plain}

\end{document}